\begin{document}
\title[A Limit Theorem for Particle Current in SEP]{A Limit Theorem for Particle Current in the Symmetric Exclusion Process}
\author{Alexander Vandenberg-Rodes}
\address{University of California--Los Angeles, Mathematics Department, Box 951555, Los Angeles, CA 90095-1555}
\date{\today}
\email{avandenb@math.ucla.edu}
\subjclass[2000]{60K35} 
\thanks{A. Vandenberg-Rodes is partially supported by NSF grants DMS-0707226 and DMS-0301795.}
\abstract
Using the recently discovered strong negative dependence properties of the symmetric exclusion process, we derive general conditions for when the normalized current of particles between regions converges to the Gaussian distribution. The main novelty is that the results do not assume any translation invariance, and hold for most initial configurations.
\endabstract
\maketitle

\newcommand{\E}{\mathbb{E}}
\newcommand{\I}{\mathfrak{Im}}
\newcommand{\N}{\mathbb{N}}
\newcommand{\R}{\mathbb{R}}
\newcommand{\C}{\mathbb{C}}
\newcommand{\Z}{\mathbb{Z}}
\newcommand{\Var}{\operatorname{Var}}
\newcommand{\Cov}{\operatorname{Cov}}
\newtheorem{theorem}{Theorem}
\newtheorem{prop}{Proposition}
\newtheorem{lemma}{Lemma}
\section{Introduction}

The {\it exclusion process} on a countable set $S$ is a continuous-time Markov process describing the motion of a family of Markov chains on $S$, subject to the condition that each site can contain only one particle at a time. With the assumption that the jump rates from sites $x$ to sites $y$ satisfy $p(x,y)=p(y,x)$, the resulting process is termed the {\it symmetric} exclusion process (SEP). See \cite{liggettbook} for the construction and the general ergodic theory. 

In conservative particle systems such as the exclusion process and the zero-range process -- systems where particles are neither created nor destroyed -- one topic of study is the bulk flow or current of particles. By this we mean the net amount of particles that have flowed from one part of the system into the other. Finding the expected current in such systems is usually quite straightforward, however, given the interdependence of the particle motions, characterizing the current fluctuations is a harder problem. In the case of asymmetric exclusion on the integer lattice, the variance of the current as seen by a moving observer has been shown to have the curious order of $t^{2/3}$, with connections to random matrix theory \cites{sepp, spohn, tracy}. 

For symmetric exclusion on $\Z$, when only nearest-neighbor jumps are allowed, the current flow is intimately tied to the classical problem of determining the motion of a tagged particle. This is especially clear when the process is started from the equilibrium measure $\nu_\rho$ -- the homogeneous product measure on $\{0,1\}^\Z$ with density $\rho$. Since particles cannot jump over each other, and the spacings between subsequent particles are independent geometric-$(\rho)$ random variables, the tagged particle's displacement is asymptotically proportional to the current across the origin. In this case Arratia \cite{arratia} gave the first central limit theorem for a tagged particle, and Peligrad and Sethuraman \cite{seth} showed process-level convergence of the current (and hence of a tagged particle) to a fractional Brownian motion. Non-equilibrium results were obtained by Jara and Landim \cites{JaraLandim,lararandom} under a hydrodynamic rescaling of the process, even with non-translation invariant jump rates (quenched random bond disorder). The heat equation machinery used there requires the initial distributions to be smooth profiles, giving results only in an average sense.

More recently, Derrida and Gerschenfeld \cite{derrida} applied techniques used for the more difficult asymmetric exclusion \cite{tracy} to SEP, obtaining the asymptotic distribution of the (non-normalized) current. Although their results are sharp, translation invariance of the jump rates and a step-initial condition seem to be required by that approach.

Meanwhile, a general negative dependence theory with application to the symmetric exclusion process was developed by Pemantle \cite{pemantle}, and Borcea, Br\"and\'en and Liggett \cite{branden}, which had immediate application to the current when SEP is started from a deterministic initial state \cite{liggettarticle}.

In this paper we further exploit the negative dependence theory in this direction, obtaining a central limit theorem for the current throughout a wide range of transition rates and initial conditions.

\section{Particle current}
The original problem as described by Pemantle \cite{pemantle} was proved and generalized to the following by Liggett \cite{liggettarticle}. Consider SEP on $\Z$ with translation invariant transition probabilities that describe a random increment with finite variance, i.e., \[\sum_{n>0}n^2p(0,n) <\infty.\] Start with particles initially occupying the whole half lattice $\{x\in \Z;x\leq 0\}$. Then the current of particles across the origin after time $t$, \[W_t=\sum_{x>0} \eta_t(x),\] satisfies the central limit theorem \[\frac{W_t-\E W_t}{\sqrt{\Var(W_t)}} \Rightarrow \mathcal N(0,1) \ \mbox{ in distribution.}\] It was conjectured in \cite{liggettarticle} that this result would also hold in the case where the transition probabilities lie in the domain of a stable law of index $\alpha>1$. We will show this in Section 5.

In this paper we consider the following general setting:
Let $S$ be an arbitrary countable set. For a partition $S=A\cup B$, we think of the net current of particles from $A$ to $B$ to be \[W_t = W^+(t)-W^-(t),\] where $W^+(t)$ is the number of particles that start in $A$ and end up in $B$ at time $t$, and $W^-(t)$ is the number of particles that start in $B$ and end up in $A$. As the usual construction of SEP does not distinguish particles, we make this quantity rigorously defined through Harris' ``stirring'' representation, as used by De Masi and Ferrari \cite{demasiferrari}. The key to the stirring representation is to notice that in SEP, particles and holes both have the same transition rates. Hence we first define a larger process of randomly jumping labels, which are later reduced to either a particle (1), or hole (0).

At time $t=0$, we place at each site $x\in S$ the label $x$. For each unordered pair $(x,y)$ of points, we place a Poisson process (clock) $N^{x,y}$ with parameter $p(x,y)$. When the clock $N^{x,y}$ rings, the labels at $x$ and $y$ switch places. Let $\xi_t^x$ denote the position at time $t$ of the label $x$, so in particular $\xi^x_0 = x$. Under reasonable conditions on the rates, the random process $\{\xi_t^x;x\in S, t\geq 0\}$ is well defined on a set of full measure. Let $L_t(x)$ denote the label occupying site $x$ at time $t$. Given an initial condition $\eta\in \{0,1\}^S$, we set \[\eta_t(x) = \eta(L_t(x)).\] Notice that if the clock $N^{x,y}$ rings at time $t$, this produces an effect on the state $\eta_t$ if and only if there is one particle and one hole between sites $x$ and $y$; in that case they switch locations. This gives one construction of SEP. 	

Define the current from $A$ to $B$ as 
\begin{equation}\label{stirring}
W_t = \sum_{x\in A}\eta(x)1_{\{\xi_t^x\in B\}} - \sum_{x\in B}\eta(x)1_{\{\xi_t^x\in A\}}.
\end{equation}
This is well defined for any initial condition $\eta$ as long as \begin{equation}\label{current}
\E\bigg(\sum_{x\in B}1_{\{\xi_t^x\in A\}}\bigg)<\infty.
\end{equation}
When $\eta$ contains only finitely many particles, the current can be written as just \[W_t =\sum_{x\in B}\Big\{\eta_t(x)-\eta(x)\Big\}.\] This coincides with the definition given above, because
\begin{align*}
\sum_{x\in B}\Big\{\eta(L_t(x))-\eta(x)\Big\} &= \sum_{x\in B}\sum_{y\in S}\eta(y)1_{\{\xi^y_t=x\}}-\sum_{y\in B}\eta(y)\\
&=\sum_{y\in S}\eta(y)1_{\{\xi_t^y\in B\}}-\sum_{y\in B}\eta(y)\\
&=\sum_{y\in A}\eta(y)1_{\{\xi_t^y\in B\}}-\sum_{y\in B}\eta(y)[1-1_{\{\xi_t^y\in B\}}],
\end{align*} which is precisely the expression (\ref{stirring}). 

Recalling that $p(\cdot,\cdot)$ give the (symmetric) transition rates of individual particles in the exclusion process under consideration, we henceforth let $X_t$ be the one-particle Markov chain on $S$ with those transition rates.

For instance, suppose $S=\Z$ and $A=\{x\leq 0\}$, $B=\{x>0\}$. Each $\xi_t^x$ has the same distribution as $X_t$ started from the site $x$, though of course for different $x$ the Markov chains are highly dependent. Then under very mild conditions on the rates, such as \[p(x,y)\leq C|x-y|^{-\alpha-1},\ \ \alpha>1,\] we can compare (using a coupling argument) $X_t$ to a translation-invariant random walk $Z_t$, having finite first moment, to show that $P^x(X_t\leq 0)\leq P^0(Z_t\geq x)$. Condition (\ref{current}) then holds, because \[\E\bigg(\sum_{x>0}1_{\{\xi_t^x\leq 0\}}\bigg)\leq \sum_{x>0}P^0(Z_t\geq x)=\E(Z_t^+)<\infty.\]
 
We say that the partition $S=A\cup B$ is {\it balanced} if there is a $c>0$, not depending on $x$, such that \begin{equation}\label{balance}
c<\liminf_{t\rightarrow \infty} P^x(X_t\in A) \leq \limsup_{t\rightarrow\infty} P^x(X_t\in A)<1-c.
\end{equation}
Here is our main theorem: 
\begin{theorem}\label{maintheorem} Let $S=A\cup B$ be any balanced partition of $S$, and $\eta\in\{0,1\}^S$ be a (deterministic) initial condition for $\eta_t$ -- the symmetric exclusion process on $S$. Suppose (\ref{current}) holds at all times, and that \begin{equation}\label{varsum}
\sup_{t\geq 0}\E^\eta\bigg(\sum_{\eta(x)=1}(1-\eta_t(x))\bigg)=\infty.
\end{equation}
Then the current $W^\eta_t$ of particles between $A$ and $B$ satisfies the central limit theorem \[\overline{W^\eta_t} :=\frac{W^\eta_t-\E W^\eta_t}{\sqrt{\Var W^\eta_t}} \Rightarrow^d \mathcal{N}(0,1).\]

Furthermore, we have the following rate of convergence in the Levy metric: \[d(\overline{W^\eta_t},\mathcal{N})\leq C(\Var W^\eta_t)^{-\frac12}.\]
\end{theorem}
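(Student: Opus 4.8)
The plan is to prove, for each fixed $t$, that $W^\eta_t$ has the distribution of an integer shift of a sum of \emph{independent} Bernoulli random variables, and then to read off both the Gaussian limit and the rate from the classical Berry--Esseen theorem. The device that produces independence is the negative dependence theory quoted in the introduction: by the result of Borcea, Br\"and\'en and Liggett, when $\eta_t$ starts from a deterministic configuration the law of $\{\eta_t(x)\}_{x\in S}$ is a strong Rayleigh measure, a property that descends to every finite coordinate projection. The analytic payoff is that for a finite strong Rayleigh law the univariate polynomial obtained by setting all variables equal is real-rooted; since its coefficients are nonnegative its roots are nonpositive, and a probability generating function with only nonpositive real roots is exactly that of a sum of independent Bernoulli variables.

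To apply this I would truncate, since $B$ and the number of particles may be infinite. Fix finite sets $B_1\subset B_2\subset\cdots$ exhausting $B$ and put $W^{(n)}_t=\sum_{x\in B_n}\bigl(\eta_t(x)-\eta(x)\bigr)$. Here $\sum_{x\in B_n}\eta_t(x)$ is a linear statistic of a finite strong Rayleigh vector, so by the previous paragraph it equals in law a sum of independent Bernoulli variables; subtracting the integer $\#\{x\in B_n:\eta(x)=1\}$, the same holds for $W^{(n)}_t$ up to that shift. Expanding $W^{(n)}_t$ in the stirring variables as in the derivation of (\ref{stirring}) shows $W^{(n)}_t\to W^\eta_t$ almost surely, with finiteness of the limit and convergence of the positive and negative parts guaranteed by (\ref{current}); one checks along the way that $\E W^{(n)}_t\to\E W^\eta_t$ and $\Var W^{(n)}_t\to\Var W^\eta_t$.

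Now Berry--Esseen applies to each $W^{(n)}_t$. For a Bernoulli$(p)$ variable the centered third absolute moment is $p(1-p)\bigl(p^2+(1-p)^2\bigr)\le p(1-p)$, so the sum of third moments is dominated by $\Var W^{(n)}_t$ and the Berry--Esseen ratio is at most $(\Var W^{(n)}_t)^{-1/2}$; hence the Kolmogorov distance, and a fortiori the Levy distance, of the normalized $W^{(n)}_t$ to $\mathcal N(0,1)$ is at most $C(\Var W^{(n)}_t)^{-1/2}$ with $C$ absolute. Letting $n\to\infty$ and using weak convergence together with the convergence of the first two moments, the bound passes to the limit: $d(\overline{W^\eta_t},\mathcal N)\le C(\Var W^\eta_t)^{-1/2}$ for every $t$ with $\Var W^\eta_t>0$. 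The Gaussian limit then follows once $\Var W^\eta_t\to\infty$.

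It remains to extract $\Var W^\eta_t\to\infty$ from the hypotheses. Because $\E\eta_t(x)=\E^x\eta(X_t)$ evolves by the single-particle semigroup, the quantity in (\ref{varsum}) equals $\sum_{x:\eta(x)=1}P^x(\eta(X_t)=0)$, a bulk measure of transport; the balanced assumption (\ref{balance}) keeps the ``which side'' indicator of each migrating particle nondegenerate, with variance at least $c(1-c)$. The goal is to convert these two facts into a lower bound $\Var W^\eta_t\ge c'\,\E^\eta\bigl(\sum_{\eta(x)=1}(1-\eta_t(x))\bigr)$, whereupon (\ref{varsum}) forces the variance to be unbounded and, with the growth of the current variance in $t$, to diverge. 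This lower bound is the main obstacle: negative dependence yields for free only the \emph{upper} bound $\Var\bigl(\sum_x\eta_t(x)\bigr)\le\sum_x\Var\eta_t(x)$, so a matching lower bound---certifying that the fluctuations are not destroyed by cancellation---cannot come from negative association alone and must be wrung out of the balanced condition, either through the explicit independent-Bernoulli parameters or through a direct two-point computation for SEP. Everything else reduces to the cited strong Rayleigh input and a routine Berry--Esseen limiting argument.
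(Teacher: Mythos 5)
Your first half---deterministic initial states are strongly Rayleigh, strong Rayleigh linear statistics match sums of independent Bernoullis, and Berry--Esseen with $\E|\zeta-\E\zeta|^3\le\Var(\zeta)$---is exactly the paper's route. But your truncation is wrong, and it is the one point the paper explicitly warns about: when $\eta$ has infinitely many particles in both $A$ and $B$, the current is \emph{not} a convergent sum of occupation variables, and $W^{(n)}_t=\sum_{x\in B_n}(\eta_t(x)-\eta(x))$ does not converge to $W^\eta_t$, almost surely or in $L^2$. Writing it out in the stirring variables, $W^{(n)}_t$ equals the net current across the entire boundary of $B_n$, i.e.\ the current across the $A$--$B$ interface \emph{plus} the net current across the internal boundary of $B_n$ inside $B$; the latter term does not vanish as $n\to\infty$ for fixed $t$. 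Concretely, on $\Z$ with $A=\{x\le 0\}$, $B_n=\{0<x\le n\}$ and $\eta$ a typical configuration of $\nu_\rho$, one has $W^{(n)}_t=J_0(t)-J_n(t)$ where $J_n$ is the current across level $n$; $\Var J_n(t)$ is of order $\sqrt t$ uniformly in $n$, and for $n\gg\sqrt t$ the two currents are nearly independent, so $\Var W^{(n)}_t\to 2\Var W^\eta_t$, not $\Var W^\eta_t$, and a Borel--Cantelli argument on the nearly independent events $\{J_n(t)\neq 0\}$ kills a.s.\ convergence as well. The paper truncates in the other direction: it keeps $A,B$ fixed and cuts the \emph{initial configuration} to $\eta^n=\eta 1_{S_n}$, proving $L^2$ convergence $W^{\eta^n}_t\to W^\eta_t$ (Lemma \ref{lem1}) via the stirring representation and Andjel's inequality (\ref{andjelineq}). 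Note that this fix has a cost your version avoids only by accident: for $\eta^n$ the relevant linear statistic $\sum_{x\in B}\eta^n_t(x)$ ranges over the infinite set $B$, so one needs Proposition \ref{bernoulli} for \emph{infinite} $T$ -- which is why the paper devotes Section 4 to proving the infinite-dimensional Bernoulli decomposition (the finite-case argument in the literature does not suffice).

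The second gap is that you never prove $\Var W^\eta_t\to\infty$; you state the desired lower bound as ``the main obstacle'' and correctly observe that negative association alone gives only an upper bound, but you supply no mechanism. This is the content of Proposition \ref{prop1}, and it is the technical heart of the paper: symmetrize $2W^\eta_t=\sum_x H(x)(\eta_t(x)-\eta(x))$ with $H=\pm1$ on $B,A$; rewrite the diagonal terms via the generator identity of Lemma \ref{varlemma}, $\sum_x\Var(\eta_t(x))=\int_0^t\sum_{x\neq y}p(x,y)[\E^y\eta(X_s)-\E^x\eta(X_s)]^2\,ds$; express the off-diagonal covariances by the integration-by-parts formula for the two-particle semigroups and dominate $V_2(t-s)$ by $U_2(t-s)$ (Liggett's comparison of interacting and independent particles), obtaining the term-by-term bound $4\Var(W^\eta_t)\ge\int_0^t\sum_{x\neq y}p(x,y)[\E^y\eta(X_s)-\E^x\eta(X_s)]^2q_{t-s}(x,y)\,ds$ with $q_s(x,y)=1-[1-2P^x(X_s\in A)][1-2P^y(X_s\in A)]$; the balance condition (\ref{balance}) is used precisely here, giving $q_s>c'$ for large $s$, and then Fatou plus duality identifies the lower bound with $c'\sum_{\eta(y)=1}\E^\eta(1-\eta_{2T}(y))$, which diverges along $T$ by (\ref{varsum}). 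Your heuristic (each migrating particle carries a nondegenerate ``which side'' coin) is the right intuition, but without the semigroup comparison there is nothing preventing the negative covariances from cancelling the diagonal terms, which is exactly the failure mode you flagged and did not resolve.
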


Condition (\ref{varsum}) is a measure of how rigid the system is: by varying the time parameter, the expected number of initially occupied sites that are then empty needs to be unbounded. 

The reader can skip to the last section to see these conditions checked for a couple of examples.

\section{Negative dependence and SEP}
Because of the hard-core repulsion of particles, the Symmetric Exclusion process tends to spread out more than independent particles would. 
One example of this is the following correlation inequality of Andjel \cite{andjel}: for disjoint subsets $A,B$ of $S$, and starting configuration $\eta$,
\begin{equation}\label{andjelineq}
P^\eta(\eta_t\equiv 1 \mbox{ on }A\cup B)\leq P^\eta(\eta_t\equiv 1\mbox{ on }A)P^\eta(\eta_t\equiv 1\mbox{ on }B).
\end{equation}
There is already a well-developed theory of positive correlations, with results such as the celebrated FKG inequality. There, one states that a measure $\mu$ is positively associated if for all monotone increasing functions $f,g$ -- assuming the natural partial ordering on $\{0,1\}^S$,
\begin{equation*}
\int fg d\mu\geq \int f d\mu \int g d\mu.
\end{equation*}
Many processes with spin-flip dynamics -- such as the Ising and Voter models -- are known to preserve positive association. That is, assuming an initial distribution that is positively associated, the distribution of the process at later times is still positively associated.
One may consider the following analogue for negative correlations: we say that $\mu$ is negatively associated if 
\begin{equation}\label{negcor}
\int fg d\mu\leq \int f d\mu \int g d\mu,
\end{equation}
for all increasing functions $f,g$ that depend on disjoint sets of coordinates. The latter condition is a reflection of the fact that any random variable is positively correlated with itself. Unfortunately SEP does not preserve negative correlations \cite{liggett2}, however, there is a useful subclass of measures -- introduced in \cite{branden} -- that is preserved by the evolution of SEP.

A multivariate polynomial $f\in \C[z_1,\dotsc,z_n]$ is called {\it stable} if 
\begin{equation}\label{stability}\I(z_j)>0\mbox{ for all } 1\leq j\leq n\ \Rightarrow \ f(z_1,\dotsc,z_n)\neq 0.\end{equation}

We then say that the probability measure $\mu$ on $\{0,1\}^n$ is {\it strongly Rayleigh} if its associated generating polynomial \[f_\mu(z) = \E^\mu(z_1^{\eta(1)}\cdots z_n^{\eta(n)})\] is stable.

In the setting $\{0,1\}^S$ for $S$ infinite, we say that the measure $\mu$ is strongly Rayleigh if every projection of $\mu$ onto finitely many coordinates is strongly Rayleigh. It is easy to check that product measures on $\{0,1\}^S$ are strongly Rayleigh.

Two key results (among many) were shown in \cite{branden}:
\begin{enumerate}
\item Strongly Rayleigh measures are negatively associated.
\item The evolution of SEP preserves the class of strongly Rayleigh measures.
\end{enumerate}
The distributional limits found in \cite{liggettarticle} relied upon the following result:
\begin{prop}\label{bernoulli}
Suppose $\mu$ is strongly Rayleigh and $T\subset S$. Then $\sum_{x\in T}\eta(x)$ has the same distribution as $\sum_{x\in T} \zeta_x$, for a collection $\{\zeta_x; x\in T\}$ of independent Bernoulli random variables.
\end{prop}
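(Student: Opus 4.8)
The plan is to work entirely with the probability generating function of $N:=\sum_{x\in T}\eta(x)$ and to show that it factors into affine pieces, each the generating function of a Bernoulli variable. I will give the argument for finite $T$, which contains all the substance, and indicate the limiting passage for infinite $T$ at the end. Let
\[ h\big((w_x)_{x\in T}\big)=\E^\mu\Big(\prod_{x\in T}w_x^{\eta(x)}\Big) \]
be the generating polynomial of the marginal of $\mu$ on $T$. This marginal is again strongly Rayleigh: for infinite $S$ it is so by definition (finite projections of $\mu$ are strongly Rayleigh), while for finite $S$ it follows by specializing the variables outside $T$ to the real value $1$, which preserves stability (a standard closure property, via Hurwitz's theorem). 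Diagonalizing by setting every $w_x=z$, I define
\[ g(z):=h(z,\dots,z)=\E^\mu\big(z^{N}\big)=\sum_{k\ge 0}P^\mu(N=k)\,z^k, \]
which is exactly the probability generating function of $N$, with $g(1)=1$.

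The crux is that $g$ is a stable univariate polynomial. Indeed, if $\I(z)>0$ then all the identified arguments $w_x=z$ lie in the open upper half-plane, so $h\ne 0$ directly from \eqref{stability} and hence $g(z)\ne 0$; in other words, identification of variables trivially preserves stability. Now $g$ has real coefficients and is not identically zero since $g(1)=1$, so its roots occur in complex-conjugate pairs; the absence of roots in the open upper half-plane therefore forces the absence of roots in the open lower half-plane as well, and every root of $g$ is real. Finally, the coefficients $P^\mu(N=k)$ are nonnegative and not all zero, so $g(x)>0$ for every $x>0$, which rules out positive roots. Thus all roots of $g$ are nonpositive reals.

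To read off the Bernoulli structure, write the roots as $-b_1,\dots,-b_m$ with $b_i\ge 0$ and let $a_m$ be the leading coefficient, so that $g(z)=a_m\prod_{i=1}^m(z+b_i)$. Setting $p_i=(1+b_i)^{-1}\in(0,1]$ and $q_i=1-p_i=b_i(1+b_i)^{-1}$, each factor satisfies $z+b_i=(1+b_i)(q_i+p_i z)$, whence
\[ g(z)=\Big(a_m\prod_{i=1}^m(1+b_i)\Big)\prod_{i=1}^m\big(q_i+p_i z\big). \]
Evaluating at $z=1$ gives both $g(1)=1$ and $\prod_i(q_i+p_i)=1$, so the leading constant equals $1$ and $g(z)=\prod_{i=1}^m(q_i+p_i z)$. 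Since $q_i+p_i z$ is the generating function of a Bernoulli$(p_i)$ variable $\zeta_i$ (degenerate to the constant $1$ when $b_i=0$), the product is the generating function of $\sum_i\zeta_i$ for independent $\zeta_i$; equality of generating functions yields equality in law, proving the claim for finite $T$.

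The step I expect to require the most care is the extension to infinite $T$. In the applications the sum $N$ is finite almost surely (as guaranteed by conditions such as \eqref{current}), so taking finite sets $T_k\uparrow T$ gives $N_k\uparrow N$ and $g_{T_k}\to g$ pointwise on $[0,1]$, each $g_{T_k}$ being a finite Bernoulli product as above. One must then argue that the limiting law is itself a (countable) convolution of Bernoullis, i.e.\ that no genuinely new limit (such as a Poisson, whose generating function is zero-free and hence not a Bernoulli product) intrudes; controlling the Bernoulli parameters along the truncation --- equivalently, the migration of the nonpositive roots --- is the delicate point. In any case it is the finite-truncation Bernoulli representation that feeds directly into the Lindeberg argument behind Theorem~\ref{maintheorem}.
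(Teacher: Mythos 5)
Your finite-$T$ argument is correct, and it is essentially the known argument (it is the proof from \cite{liggettarticle}, and it is also how the paper sees that each truncated generating polynomial $Q_n(z)=\E^\mu z^{\sum_{x\in T_n}\eta(x)}$ has only real nonpositive zeros). But the substance of the proposition in this paper is the infinite case: the author states explicitly that the earlier argument ``only holds for finite subsets $T$,'' and Section 4 exists precisely to repair that. You correctly diagnose the danger --- an exponential (Poisson-type) factor surviving the truncation limit --- but you leave it unresolved, and it does not resolve itself: locally uniform limits of polynomials with only real nonpositive zeros form the class $Ce^{-\sigma z}\prod_k(1-z/a_k)$ with $\sigma\le 0$, $a_k<0$, $\sum_k|a_k|^{-1}<\infty$ (\cite{levin}*{VIII, Theorem 1}), and the exponential factor can genuinely appear in such limits: the binomial generating functions $\bigl(1+\lambda(z-1)/n\bigr)^n$ are real-rooted yet converge to the zero-free Poisson generating function $e^{\lambda(z-1)}$. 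So ``no new limit intrudes'' is exactly what must be proved, not merely flagged. Separately, your appeal to ``conditions such as \eqref{current}'' imports a hypothesis the proposition does not have; the paper instead disposes of the case $\E^\mu\sum_{x\in T}\eta(x)=\infty$ by an Erd\H{o}s--R\'enyi generalization of Borel--Cantelli for the negatively dependent events $\{\eta(x)=1\}$ \cite{erdos}, under which the sum is a.s.\ infinite and the claim is trivial, reducing to $a:=\sum_{x\in T}a_x<\infty$ with $a_x=\E^\mu\eta(x)$.

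The missing idea is a quantitative use of negative association, which strongly Rayleigh measures enjoy. For $r>1$ the functions $r^{\eta(x)}$ are increasing and depend on disjoint coordinates, so \eqref{negcor} gives $\E^\mu r^{\sum_{x\in T}\eta(x)}\le\prod_{x\in T}[1+(r-1)a_x]\le e^{ra}<\infty$ (this is \eqref{Q} in the paper). This bound does two jobs: it dominates the truncation so that $Q_n\to Q$ locally uniformly and $Q$ is entire, and it yields the growth estimate that kills the exponential factor: since at most $ar^{1/2}$ indices satisfy $a_x>r^{-1/2}$, one gets $Q(r)\le r^{ar^{1/2}}\exp\bigl((r-1)\sum_{a_x\le r^{-1/2}}a_x\bigr)$, where the sum in the exponent tends to $0$, so $Q(r)\le e^{cr}$ for every $c>0$ and $r$ large; this forces $\sigma=0$ in the Levin representation, and then $Q(1)=1$ gives $Q(z)=\prod_k[p_kz+(1-p_k)]$ with $p_k=1/(1-a_k)$, the generating function of an independent Bernoulli sum. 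Without this step (or some substitute controlling the root migration you mention), your proof establishes the proposition only for finite $T$ --- which, together with an easy bounded-degree Hurwitz variant for initial conditions with finitely many particles, is admittedly all that the proof of Theorem \ref{maintheorem} consumes --- but not the proposition as stated.
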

Combining the above proposition with standard conditions for convergence to the normal distribution yields a central limit theorem for strongly Rayleigh random variables.
\begin{prop}\label{clt}
Suppose that for each $n$ the collection of Bernoulli random variables $\{\eta_n(x);x\in S\}$ determines a strongly Rayleigh probability measure. Furthermore, assume the variances $\Var(\sum_{x\in S} \eta_n(x)) \rightarrow \infty$ as $n\rightarrow \infty$. Then \[\frac{\sum_{S}\eta_n(x)-\E(\sum_{S}\eta_n(x))}{\sqrt{\Var(\sum_{S}\eta_n(x))}}\stackrel{d}{\Rightarrow} \mathcal{N}(0,1) \mbox{ as }n\rightarrow \infty.\]
\end{prop}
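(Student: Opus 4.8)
The plan is to reduce the statement to a classical central limit theorem for sums of \emph{independent} random variables, using Proposition \ref{bernoulli} to dispose of the dependence entirely. First I would apply Proposition \ref{bernoulli} with $T=S$: since the law of $\{\eta_n(x)\}$ is strongly Rayleigh, the total $\sum_{x\in S}\eta_n(x)$ has the same distribution as $\sum_k \zeta_k^{(n)}$, where the $\zeta_k^{(n)}$ are independent Bernoulli variables with parameters $p_k^{(n)}$. This identifies both the mean and the variance $s_n^2 := \Var\big(\sum_{x\in S}\eta_n(x)\big) = \sum_k p_k^{(n)}\big(1-p_k^{(n)}\big)$, so the standing hypothesis becomes simply $s_n^2\to\infty$. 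We have thereby replaced the highly correlated family $\{\eta_n(x)\}$ by a triangular array of independent summands having the identical distribution of the centered, normalized sum.

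Next I would verify a Lyapunov-type condition for the centered variables $Y_k^{(n)} := \zeta_k^{(n)}-p_k^{(n)}$. The key observation is that each $Y_k^{(n)}$ is bounded, $|Y_k^{(n)}|\le 1$, so its third absolute moment is dominated by its variance: $\E|Y_k^{(n)}|^3 \le \E|Y_k^{(n)}|^2 = p_k^{(n)}\big(1-p_k^{(n)}\big) =: \sigma_k^2$. Summing over $k$, the Lyapunov ratio obeys
\[ \frac{1}{s_n^3}\sum_k \E|Y_k^{(n)}|^3 \le \frac{1}{s_n^3}\sum_k \sigma_k^2 = \frac{s_n^2}{s_n^3} = \frac{1}{s_n}\longrightarrow 0. \]
Equivalently, the Lindeberg condition holds trivially: for fixed $\epsilon>0$ and $n$ large enough that $\epsilon s_n>1$, every indicator $1_{\{|Y_k^{(n)}|>\epsilon s_n\}}$ vanishes, so the Lindeberg sum is identically zero. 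The Lindeberg--Feller theorem then yields $s_n^{-1}\sum_k Y_k^{(n)}\Rightarrow \mathcal N(0,1)$, which is exactly the claimed convergence.

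The one point requiring care is that when $S$ is infinite the $n$-th row of the array contains countably many summands. Since $s_n^2<\infty$, Kolmogorov's three-series theorem guarantees that $\sum_k \zeta_k^{(n)}$ converges almost surely, so the sum is well defined, and the Lindeberg--Feller theorem applies verbatim to a countable row; alternatively one truncates to finite subsets $S_m\uparrow S$, applies the finite-dimensional CLT (each projection being again strongly Rayleigh), and passes to the limit using that the variance of the tail $\sum_{x\notin S_m}\eta_n(x)$ is uniformly negligible. I expect this passage to the countable-row setting to be the only genuine obstacle; the moment bound itself is immediate from boundedness. Finally, I would remark that the very same estimate fed into the Berry--Esseen inequality gives $\sup_x\big|P(s_n^{-1}\sum_k Y_k^{(n)}\le x)-\Phi(x)\big|\le C/s_n$, the quantitative rate $C(\Var)^{-\frac12}$ needed afterward for Theorem \ref{maintheorem}.
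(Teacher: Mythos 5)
Your proposal is correct and takes essentially the same route as the paper: the paper deduces the proposition by combining Proposition \ref{bernoulli} (whose infinite-$T$ case is exactly what Section 4 supplies) with standard conditions for normal convergence, and in the proof of Theorem \ref{maintheorem} it uses precisely your bound $\E|\zeta-\E\zeta|^3\leq \Var(\zeta)$ in Esseen's inequality to obtain the rate $C(\Var W^\eta_t)^{-\frac12}$. One small caution: finiteness of $s_n^2$ alone gives a.s.\ convergence only of the \emph{centered} series $\sum_k (\zeta^{(n)}_k - p^{(n)}_k)$ (take $p^{(n)}_k = 1-k^{-2}$ to see the raw sum can diverge), so the a.s.\ finiteness of $\sum_k \zeta^{(n)}_k$ really rests on the implicit assumption $\E\sum_{x\in S}\eta_n(x)<\infty$ built into the statement -- which your truncation alternative handles correctly in any case.
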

These kinds of results were already known for determinantal processes -- see \cites{peres,lyons} -- although this is hardly a coincidence, as a large subset of determinantal measures are strongly Rayleigh \cite{branden}*{Proposition 3.5}.

During the writing of this paper, it became apparent that the argument given in \cite{liggettarticle} for the first proposition only holds for finite subsets $T$. While the results here and in \cite{liggettarticle} can be modified to accommodate this deficiency, in section 4 we will give a proof for the infinite case that may be of independent interest.

The proof of Theorem \ref{maintheorem} hinges upon the following proposition:
\begin{prop}\label{prop1}
Under the conditions of Theorem \ref{maintheorem}, \[\Var W^\eta_t\rightarrow \infty \mbox{ as $t\rightarrow\infty$.}\]
\end{prop}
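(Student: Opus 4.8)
The plan is to reduce the statement to a lower bound on the variance of an occupation sum and then to extract that bound from the two hypotheses (\ref{balance}) and (\ref{varsum}). The computation carried out in Section 2 gives $W^\eta_t=\sum_{y\in B}(\eta_t(y)-\eta(y))$; since $\sum_{y\in B}\eta(y)$ is a deterministic constant, condition (\ref{current}) justifies rearranging the (a priori infinite) sums and yields $\Var W^\eta_t=\Var\big(\sum_{y\in B}\eta_t(y)\big)$. Because the deterministic $\eta$ is strongly Rayleigh and SEP preserves this class, $\eta_t$ is strongly Rayleigh, so by Proposition \ref{bernoulli} the occupation sum is distributed as a sum of independent Bernoulli variables; in particular the variance is finite and is a genuine sum of per-coordinate contributions. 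This structure, however, only supplies the easy half of the estimate, since negative association never yields more than an \emph{upper} bound on the variance of a sum.

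For the lower bound I would avoid the (negative) covariances altogether by expanding $W^\eta_t$ in an orthogonal martingale-difference decomposition with respect to the filtration generated by the independent clocks $N^{x,y}$, so that $\Var W^\eta_t$ becomes a sum of \emph{nonnegative} terms; any sub-collection of these then furnishes a valid lower bound. The terms I would retain are those tied to particles that cross from an initially occupied site into an initially empty one, because only such crossings alter the occupation field detectably. Two ingredients feed this count. First, the balance condition enters through the particle (flux) form $W^\eta_t=\sum_{x\in A,\,\eta(x)=1}1_{\{\xi_t^x\in B\}}-\sum_{x\in B,\,\eta(x)=1}1_{\{\xi_t^x\in A\}}$: each indicator has variance $P^x(X_t\in A)P^x(X_t\in B)$, which (\ref{balance}) keeps $\ge c(1-c)$ for all large $t$, uniformly in $x$, so every such crossing carries an order-one contribution. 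Second, using $\E^\eta\eta_t(y)=P^y(X_t\in\{x:\eta(x)=1\})$ together with the symmetry of $p(\cdot,\cdot)$, the quantity in (\ref{varsum}) equals $\sum_{\eta(y)=0}P^y(X_t\in\{x:\eta(x)=1\})$, the expected number of initially empty sites occupied at time $t$---that is, exactly the number of detectable crossings---and (\ref{varsum}) makes this unbounded.

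The principal difficulty is to turn this heuristic into a genuine lower bound, namely to exhibit a sub-collection of the orthogonal components that is both bounded below by the per-crossing variances above and large in number. The plan is to disentangle the exclusion interaction by coupling the relevant labels to independent copies of $X_t$ and controlling the discrepancy by the probability that two symmetric walks meet before time $t$ (the dual two-particle description used by De Masi and Ferrari \cite{demasiferrari}); this collision estimate shows that a fixed positive fraction of the retained contributions act independently, so their variances add and $\Var W^\eta_t$ grows at least like the quantity in (\ref{varsum}). Finally, to promote the supremum in (\ref{varsum}) to the genuine limit claimed here, I would show that an infinite supremum forces divergence, either by establishing that $t\mapsto\Var W^\eta_t$ is eventually nondecreasing (via reversibility of SEP and a carr\'e-du-champ computation of $\tfrac{d}{dt}\Var W^\eta_t$) or by a running-maximum argument showing that contributions from past crossings persist. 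I expect the collision estimate, and the accompanying extraction of an independent sub-collection, to be the main obstacle.
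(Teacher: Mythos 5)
The core of your argument is missing, and the plan you sketch for filling it would fail in exactly the cases the theorem is designed to cover. You correctly identify that negative association only gives upper bounds, and that the problem is to stop the off-diagonal covariances from cancelling the diagonal terms; but your proposed mechanism --- a martingale-difference decomposition over the clock filtration, followed by a coupling of labels to independent copies of $X_t$ controlled by a two-walk collision estimate --- is left entirely unexecuted (you flag it yourself as ``the main obstacle''), and the collision estimate is not available in this generality. For nearest-neighbor SEP on $\Z$, the paper's motivating example, two labels meet before time $t$ with probability tending to one, so no fixed pair acts independently; one would have to exploit that the relevant pairs sit at distance of order $\sqrt t$, which is precisely the kind of delicate, kernel-specific analysis the paper is at pains to avoid, and which has no analogue on an arbitrary countable $S$ with no translation invariance or geometry --- the hypotheses here are only (\ref{balance}) and (\ref{varsum}). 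A secondary gap: your opening reduction $\Var W^\eta_t=\Var\big(\sum_{y\in B}\eta_t(y)\big)$ is illegitimate when $\eta$ has infinitely many particles in $B$, since $\sum_{y\in B}\eta(y)=\infty$ and the sum $\sum_{y\in B}(\eta_t(y)-\eta(y))$ need not converge absolutely; condition (\ref{current}) guarantees the current is well defined, not that this rearrangement is. The paper proves the bound for finite-support $\eta$ and passes to the limit via the $L^2$ approximation of Lemma \ref{lem1}.

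The paper's actual route is algebraic rather than probabilistic, and it is worth seeing why it closes where your plan stalls. Symmetrizing, $2W^\eta_t=\sum_x H(x)[\eta_t(x)-\eta(x)]$ with $H=\pm 1$ on $B,A$, so $4\Var(W^\eta_t)$ splits into diagonal variances plus signed covariances. Lemma \ref{varlemma} (a generator computation using the duality $\E^\eta\eta_s(x)=\E^x\eta(X_s)$) rewrites the diagonal as $\int_0^t\sum_{x\neq y}p(x,y)[\E^y\eta(X_s)-\E^x\eta(X_s)]^2\,ds$, and the same integrand appears when the covariance term is expanded by integration by parts with the two-particle semigroups $U_2$ (independent) and $V_2$ (exclusion): one gets $V_2(t-s)H(x,y)$ against that integrand, which Liggett's comparison inequality dominates by $U_2(t-s)H(x,y)$. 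This yields term-by-term domination, $4\Var(W^\eta_t)\geq\int_0^t\sum_{x\neq y}p(x,y)[\E^y\eta(X_s)-\E^x\eta(X_s)]^2 q_{t-s}(x,y)\,ds$ with $q_s(x,y)=1-[1-2P^x(X_s\in A)][1-2P^y(X_s\in A)]$, and (\ref{balance}) enters only through $q_s>c'$ for large $s$ --- no independence extraction, no collision estimate. Finally, Fatou gives $4\liminf_t\Var(W^\eta_t)\geq c'\sum_x\Var(\eta_T(x))$ for \emph{every fixed} $T$, and a duality computation identifies $\sum_x\Var(\eta_T(x))$ with the quantity in (\ref{varsum}) at time $2T$; since the bound holds for all $T$, the supremum in (\ref{varsum}) forces $\liminf_t\Var(W^\eta_t)=\infty$ directly. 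This also makes your last proposed step unnecessary: no monotonicity of $t\mapsto\Var W^\eta_t$ is needed (and your reversibility/carr\'e-du-champ argument for it is itself unsubstantiated, as the process here is far from stationarity).
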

Suppose we start with $\eta\equiv 1$ on $A$ and $\eta\equiv 0$ on $B$. It is trivial to check that product measures on $\{0,1\}^S$ are strongly Rayleigh, so by Proposition \ref{clt}, in order to show convergence to the normal distribution we need only show that the variance $\Var W^\eta_t\rightarrow \infty$ as $t\rightarrow \infty$. Then we may write the current variance as \[\Var(W^\eta_t)=\Var\bigg(\sum_{x\in B} \eta_t(x)\bigg) = \sum_{x\in B} \Var(\eta_t(x)) + \sum_{\substack{x,y\in B\\x\neq y}} \Cov(\eta_t(x),\eta_t(y)).\] Because of the negative association property of symmetric exclusion, all the off-diagonal covariances ($x \neq y$) are negative, while of course the diagonal terms are positive. The approach in \cite{liggettarticle} for the Pemantle problem described above was to compute the exact asymptotics of the diagonal terms, and then estimate the negative (off-diagonal) terms to be at most some fixed percentage smaller. Getting tight-enough bounds on the negative terms was already tricky in that case - considering even slightly more general transition functions $p(x,y)$ seems to require quite delicate analysis to obtain bounds that even approached the positive terms' asymptotics. To get around this obstacle, we do a generator computation in order to rewrite the {\it positive} variances, and obtain term-by-term domination of the off-diagonal covariances.

There is one additional complication when extending the result to the more general initial conditions of Theorem \ref{maintheorem}: when $\eta$ contains infinitely many particles in both $A$ and $B$ we cannot write $W^\eta_t$ as a convergent sum of occupation variables. Instead, we approximate by considering initial conditions with only finitely many particles. Consider first the following lemma.
\begin{lemma}\label{lem1}
Suppose $S_n\nearrow S$ is an increasing sequence of finite subsets, and for $\eta\in \{0,1\}^S$ define $\eta^{n}(x) = 1_{x\in S_n} \eta(x)$. Then for fixed $t\geq 0$ such that (\ref{current}) holds,
\[W_t^{\eta^{n}}\rightarrow W_t^{\eta} \mbox{ in }L^2 \mbox{ as }n\rightarrow\infty.\]
\end{lemma}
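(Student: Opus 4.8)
The plan is to show $L^2$ convergence directly by estimating $\E[(W_t^\eta - W_t^{\eta^n})^2]$ and showing it vanishes as $n\to\infty$. First I would write out the difference using the stirring representation (\ref{stirring}). Since $\eta^n$ agrees with $\eta$ on $S_n$ and is zero outside, the particles that distinguish the two configurations are precisely those sitting at sites $x\notin S_n$ with $\eta(x)=1$. Thus the difference $W_t^\eta - W_t^{\eta^n}$ is itself a current-type sum, restricted to initial particles living outside $S_n$:
\begin{equation*}
W_t^\eta - W_t^{\eta^n} = \sum_{x\in A\setminus S_n}\eta(x)1_{\{\xi_t^x\in B\}} - \sum_{x\in B\setminus S_n}\eta(x)1_{\{\xi_t^x\in A\}}.
\end{equation*}
The goal is to control the $L^2$ norm of this tail.

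The key step is a second-moment estimate. I would bound the $L^2$ norm by the $L^2$ norm of the sum of absolute values $D_n := \sum_{x\notin S_n}\eta(x)\big(1_{\{\xi_t^x\in B,\,x\in A\}} + 1_{\{\xi_t^x\in A,\,x\in B\}}\big)$, each term of which is an indicator that the label starting at $x$ has crossed the partition. Expanding $\E[D_n^2]$ gives a diagonal contribution $\sum_{x\notin S_n}\E[(\cdots)]$ and off-diagonal cross terms $\sum_{x\neq y}\E[\cdots]$. The crucial observation is that the indicator events $\{\xi_t^x\in B\}$ for $x\in A$ (and symmetrically) are exactly the events counted in the finiteness condition (\ref{current}): the full double sum $\E[D^2]$ with $D$ ranging over all of $S$ is dominated by the quantity controlled in (\ref{current}) together with its square. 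Because (\ref{current}) guarantees $\E(\sum_{x\in B}1_{\{\xi_t^x\in A\}})<\infty$ (and the symmetric $A\to B$ sum is finite by the standing assumption that the current is well defined), the full second moment $\E[D^2]$ is finite. The tail sums over $x\notin S_n$ are then tails of convergent series, so they tend to $0$ as $S_n\nearrow S$ by dominated convergence.

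The main obstacle will be handling the off-diagonal terms, since the labels $\xi_t^x$ and $\xi_t^y$ are highly dependent under the stirring dynamics and I cannot simply factor the expectations. The cleanest route is not to estimate them individually but to recognize that $D$ is a nonnegative random variable with $\E[D^2]<\infty$, and that $D_n = \sum_{x\notin S_n}(\cdots)$ decreases monotonically to $0$ pointwise as $n\to\infty$ since each label starts at exactly one site and eventually every site lies in some $S_n$. Monotone (or dominated) convergence applied to $D_n^2 \le D^2$ then yields $\E[D_n^2]\to 0$ without ever needing to disentangle the correlations. I would therefore front-load the argument by first establishing $\E[D^2]<\infty$ from (\ref{current}), which reduces the whole lemma to a dominated-convergence statement.

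The remaining care is in verifying $\E[D^2]<\infty$ rigorously. Here I would split the crossing indicators into the two types and use the elementary bound $(a+b)^2\le 2a^2+2b^2$ to treat the $A\to B$ and $B\to A$ crossings separately. For the $B\to A$ crossings the relevant sum is finite by hypothesis (\ref{current}); for the $A\to B$ crossings finiteness follows from the assumption that $W_t^\eta$ is well defined, equivalently from the symmetric version of (\ref{current}) that holds under the balance and moment conditions in force (as illustrated by the coupling estimate with the random walk $Z_t$ given above). Writing $\E[D^2] = \E[D] + \sum_{x\neq y}\E[\,\cdots\,]$ after expanding, I expect the diagonal part to be immediately finite and the off-diagonal part to be bounded by $(\E[D])^2$ via the trivial inequality $1_E 1_F \le \tfrac12(1_E+1_F)$ or simply by $D^2 \le (\text{total crossing count})^2$; in any case the total number of crossings has finite first and second moments under (\ref{current}), completing the proof.
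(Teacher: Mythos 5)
Your setup is the same as the paper's---write $W_t^\eta - W_t^{\eta^n}$ as the tail current over $x\notin S_n$ and estimate its second moment---but the decisive step fails. Condition (\ref{current}) controls only the \emph{first} moment of the crossing count, and none of your proposed devices upgrades this to a second-moment bound. The inequality $1_E1_F\leq \tfrac12(1_E+1_F)$ applied to the off-diagonal terms gives $\sum_{x\neq y}\E[1_{E_x}1_{E_y}]\leq \sum_{x\neq y}\tfrac12[P(E_x)+P(E_y)]$, whose right-hand side is infinite as soon as infinitely many $P(E_x)$ are nonzero, since each $P(E_x)$ is counted once for every $y\neq x$; it certainly does not give the bound $(\E D)^2$ you want. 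Likewise the assertion that ``the total number of crossings has finite first and second moments under (\ref{current})'' is precisely the unproven claim, not a consequence of (\ref{current}): for a general family of dependent indicators, $\sum_x P(E_x)<\infty$ does not imply $\sum_{x\neq y}P(E_x\cap E_y)<\infty$. Your dominated-convergence shortcut is therefore circular---it needs the dominating variable $D$ to satisfy $\E[D^2]<\infty$, which is exactly what is left unestablished. You correctly identified the obstacle (``I cannot simply factor the expectations'') but the devices offered to bypass it do not close the gap.

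The missing idea is that for the stirring labels one \emph{can} factor the expectations, in the needed direction, via negative dependence. The paper's proof splits the difference with $(a-b)^2\leq a^2+b^2$, expands each square, and invokes Andjel's correlation inequality (\ref{andjelineq}) to bound the cross terms: for $x\neq y$,
\[
\E\big[1_{\{\xi^x_t\in B\}}1_{\{\xi^y_t\in B\}}\big]\leq \E\, 1_{\{\xi^x_t\in B\}}\cdot \E\, 1_{\{\xi^y_t\in B\}},
\]
so that the second moment of the tail is at most
\[
\Big(\E\sum_{x\in A}1_{\{\xi^x_t\in B\}}(\eta(x)-\eta^n(x))\Big)^2 + \E\sum_{x\in A}1_{\{\xi^x_t\in B\}}(\eta(x)-\eta^n(x)),
\]
i.e.\ the square of a first moment plus the diagonal, both of which vanish as $n\to\infty$ by dominated convergence under (\ref{current}). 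Note that once this negative-correlation input is in hand, your own plan does go through: it yields $\E[D^2]\leq(\E D)^2+\E D<\infty$, and your monotonicity argument becomes valid. But the negative association of the exclusion/stirring dynamics is the essential, non-elementary ingredient of the lemma, and without it your proof does not stand.
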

\begin{proof}
Using the stirring representation (\ref{stirring}) and the inequality $(a-b)^2\leq a^2+b^2$ for $a,b\geq 0$, 
\begin{align*}
\E(W^\eta_t - W^{\eta^n}_t)^2\leq \ &\E\Big(\sum_{x\in A}1_{\{\xi^x_t\in B\}}(\eta(x)-\eta^n(x))\Big)^2 \\
+ &\E\Big(\sum_{x\in B}1_{\{\xi^x_t\in A\}}(\eta(x)-\eta^n(x))\Big)^2.
\end{align*}
Both expectations are dealt with identically, so we consider here only the first one. Expanding it gives
\begin{align*}
\sum_{x,y\in A}\E 1_{\{\xi^x_t\in B\}}1_{\{\xi^y_t\in B\}}(\eta(x)-\eta^n(x))(\eta(y)-\eta^n(y))\\
\leq \Big(\E \sum_{x\in A} 1_{\{\xi^x_t\in B\}}(\eta(x)-\eta^n(x))\Big)^2 + \E\sum_{x\in A} 1_{\{\xi^x_t\in B\}}(\eta(x)-\eta^n(x)).
\end{align*}
The inequality here follows by negative dependence (\ref{andjelineq}), and the expectations appearing in the last line converge to zero by Dominated Convergence.
\end{proof}

\begin{proof}[Proof of Theorem 1.] For $(\eta,\eta^n)$ as above we note by the triangle inequality that 
\begin{equation}\label{triangle}
d(\overline{W^\eta_t},\mathcal{N})\leq d(\overline{W^\eta_t},\overline{W^{\eta^n}_t}) + d(\overline{W^{\eta^n}_t},\mathcal{N}).
\end{equation}
Now recall from Proposition \ref{bernoulli} that 
\[W^{\eta^n}_t + \sum_{x\in A}\eta^n(x) \stackrel{d}{=} \sum_{x\in S_n}\zeta^n_{t,x}\] where the $\zeta$ are Bernoulli and independent in $x$ for each $n$ and $t$. Normalize both sides to see that \[\overline{W_t^{\eta^n}}\stackrel{d}{=} \overline{\sum_{x\in S_n}\zeta^n_{t,x}}.\] Hence by Esseen's inequality \cite{petrov}*{V, Theorem 3}, 
\begin{align*}
d(\overline{W^{\eta^n}_t},\mathcal{N})\leq C\left[\sum_{x\in S_n} \Var (\zeta^n_{t,x})\right]^{-\frac32}\left[\sum_{x\in S_n} \E|\zeta^n_{t,x}-\E\zeta^n_{t,x}|^3\right]\\
\leq C\left[\sum_{x\in S_n} \Var (\zeta^n_{t,x})\right]^{-\frac12},
\end{align*}
because $\zeta$ Bernoulli implies that $\E|\zeta-\E\zeta|^3\leq \Var(\zeta)$ by an easy calculation.
Taking $n\rightarrow\infty$ above and in (\ref{triangle}), and using Lemma \ref{lem1}, 
\[d(\overline{W^\eta_t},\mathcal{N})\leq C[\Var(W^\eta_t)]^{-\frac12}.\]
Applying Proposition \ref{prop1} finishes the proof.
\end{proof}

Our proof of Proposition \ref{prop1} relies upon the following representation for the on-diagonal covariances.
\begin{lemma}\label{varlemma}
Let $X_t$ be defined as above Theorem \ref{maintheorem}. Then for any $\eta\in \{0,1\}^S$ with finite support (i.e., $\eta(x)=1$ for only finitely many $x$),
\begin{equation}\label{vareq}
\sum_{x\in S} \Var(\eta_t(x)) = \int_0^t\sum_{\substack{x\neq y\\x,y\in S}}p(x,y)[\E^y\eta(X_s)-\E^x\eta(X_s)]^2ds.
\end{equation}
\end{lemma}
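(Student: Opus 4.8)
The plan is to reduce everything to the single-particle semigroup via duality and then carry out a generator (Dirichlet-form) computation. First I would record the basic symmetry identity for SEP. Since $\eta_t(x)=\eta(L_t(x))$ and the stirring clocks are symmetric, the label occupying $x$ at time $t$ is distributed as $X_t$ run from $x$; hence, writing $u(s,x):=\E^\eta\eta_s(x)$, we have $u(s,x)=\E^x\eta(X_s)=(P_s\eta)(x)$, where $P_s$ is the one-particle semigroup with generator $\mathcal{L}f(x)=\sum_{y\neq x}p(x,y)[f(y)-f(x)]$. Because $\eta_s(x)\in\{0,1\}$, its second moment equals its first, so $\Var(\eta_s(x))=u(s,x)-u(s,x)^2$, and the left-hand side of (\ref{vareq}) becomes $\sum_x u(t,x)-\sum_x u(t,x)^2$.

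Next I would differentiate this in $t$. The linear part is conserved: $\frac{d}{dt}\sum_x u(t,x)=\sum_{x}\mathcal{L}u(t,x)=\sum_{x\neq y}p(x,y)[u(t,y)-u(t,x)]$, which vanishes by the symmetry $p(x,y)=p(y,x)$ (this is simply conservation of the total particle number). For the quadratic part, $\frac{d}{dt}\sum_x u(t,x)^2=2\sum_x u(t,x)\mathcal{L}u(t,x)$, and the symmetric summation-by-parts identity $\sum_x f\,\mathcal{L}f=-\frac12\sum_{x,y}p(x,y)[f(y)-f(x)]^2$ turns this into $-\sum_{x\neq y}p(x,y)[u(t,y)-u(t,x)]^2$. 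Combining the two pieces gives $\frac{d}{dt}\sum_x\Var(\eta_t(x))=\sum_{x\neq y}p(x,y)[u(t,y)-u(t,x)]^2$, which is exactly the integrand of (\ref{vareq}).

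Finally, at $t=0$ the configuration is deterministic, so $\Var(\eta_0(x))=0$ for every $x$ and the total variance vanishes. Integrating the identity above from $0$ to $t$ and substituting $u(s,\cdot)=\E^{\,\cdot}\eta(X_s)$ then yields (\ref{vareq}).

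The main obstacle is the analytic bookkeeping rather than the algebra: I must justify interchanging the time derivative with the infinite sums and establish convergence of the double sum defining the Dirichlet form. The finite-support hypothesis on $\eta$ is what makes this tractable — it guarantees that $\sum_x u(t,x)$ equals the (finite) conserved particle count, whence $\sum_x u(t,x)^2\le\sum_x u(t,x)<\infty$ since $0\le u\le 1$, and it supplies enough decay of $u(t,\cdot)$ to control $\sum_x|u\,\mathcal{L}u|$ and to legitimize the termwise differentiation (for instance by dominated convergence after passing through finite truncations $S_n\nearrow S$). Checking that $\partial_t(P_t\eta)(x)=\mathcal{L}(P_t\eta)(x)$ holds pointwise under the standing regularity assumptions on the rates is the other routine-but-necessary verification.
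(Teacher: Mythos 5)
Your proposal is correct and takes essentially the same approach as the paper: duality to reduce $\E^\eta\eta_s(x)$ to the one-particle semigroup, termwise differentiation in time, and the symmetry $p(x,y)=p(y,x)$ to produce the squared gradient, with absolute convergence supplied by the finite support of $\eta$. The only difference is bookkeeping — you split off the conserved linear part and apply the Dirichlet-form summation-by-parts identity to the quadratic part, whereas the paper differentiates the combined expression and kills the leftover antisymmetric sum by exchanging $x$ and $y$, which is the same cancellation.
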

\begin{proof}
(Essentially a generator computation). From the duality theory of SEP \cite{liggettbook}*{VIII, Theorem 1.1}, $\E^\eta\eta_s(x)=\E^x\eta(X_s)$. So we can write \[\sum_{x\in S} \Var \eta_s(x) =\sum_{x\in S} \Big\{\E^\eta \eta_s(x) - (\E^\eta\eta_s(x))^2 \Big\}= \sum_{x\in S} \Big\{\E^x \eta(X_s)-[\E^x \eta(X_s)]^2\Big\}.\] Let $U$ and $\{U(t);t\geq 0\}$ be the generator and semi-group for $X_t$. Changing into the language of generators, we have \[\sum_{x\in S}\Var(\eta_s(x)) =\sum_{x\in S} \Big\{U(s)\eta(x)-[U(s)\eta(x)]^2\Big\}.\] Now take the derivative w.r.t. $s$. For the second equality below, recall that \[Uf(x) = \sum_{y\in S:y\neq x}p(x,y)[f(y)-f(x)],\] for bounded $f$.
\begin{align}\label{reverse}
	\frac d{ds}  \sum_{x\in S} \Big\{U(s)\eta(x)&-[U(s)\eta(x)]^2\Big\} = \sum_{x\in S} U[U(s)\eta](x)[1-2U(s)\eta(x)]\notag\\
	=\sum_{\substack{x\neq y\\x,y\in S}} p(x,y)&[\E^y \eta(X_s)-\E^x \eta(X_s)][1-2\E^x\eta(X_s)]\notag\\
	=\sum_{x\neq y} p(x,y)&[\E^y\eta(X_s)-\E^x \eta(X_s)]^2\notag\\
	+ &\sum_{x\neq y} p(x,y)[\E^y\eta(X_s)-\E^x\eta(X_s)][1-\E^y\eta(X_s)-\E^x\eta(X_s)],
\end{align}
where all sums converge absolutely because $\eta$ has finite support. Since $p(x,y)=p(y,x)$, exchanging $x$ and $y$ in the latter sum in (\ref{reverse}) shows it to be its own negative, hence zero. We thus conclude that \[\frac{d}{ds}\sum_{x\in S}\Var(\eta_s(x))=\sum_{\substack{x\neq y\\x,y\in S}}p(x,y)[\E^y\eta(X_s)-\E^x \eta(X_s)]^2.\]Integrating from $0$ to $t$ finishes the proof.
\end{proof}

\begin{proof}[Proof of Proposition \ref{prop1}]
Considering only initial conditions $\eta\in \{0,1\}^S$ containing finitely many particles, the net current from $A$ to $B$ can be written as 
\[W^\eta_t =\sum_{x\in B} \Big\{\eta_t(x)-\eta(x)\Big\} = \sum_{x\in A} \Big\{\eta(x)-\eta_t(x)\Big\}.\]
We symmetrize this expression:
\[2W^\eta_t = \sum_{x\in S} [H(x)\eta_t(x)-H(x)\eta(x)], \mbox{ where } H(x) = \left\{ \begin{array}{ll} 1 &\mbox{ if $x\in B$}\\ -1 &\mbox{ if $x\in A$,}\end{array}\right. \]
and consider the variance: 
\begin{align*}
4\Var (W^\eta_t) = \sum_{x\in S}\Var(\eta_t(x)) + \sum_{\substack{x\neq y\\x,y\in S}} H(x)H(y)\Cov(\eta_t(x),\eta_t(y)).
\end{align*}

We first deal with the covariances above, proceeding almost identically to \cite{liggettarticle}. Let $\{U_2(t);t\geq 0\}$  be the semigroup for two identical, independent Markov chains with symmetric kernel $p(x,y)$, and let $U_2$ be its infinitesimal generator. Specifically, 
\begin{equation}\label{U} U_2f(x,y) = \sum_{z\in S} \Big\{p(x,z)[f(z,y)-f(x,y)] + p(y,z)[f(x,z)-f(x,y)]\Big\}.\end{equation}
Let $\{V_2(t);t\geq 0\}$ and $V_2$ be the semigroup and generator for that process with the exclusion interaction, i.e.
\begin{equation}\label{V} V_2f(x,y) = \sum_{z\neq y} p(x,z)[f(z,y)-f(x,y)] + \sum_{z\neq x} p(y,z)[f(x,z)-f(x,y)].\end{equation} 

By a slight abuse of notation, define \[\eta(x,y) = \eta(x)\eta(y) \ \mbox{ and } H(x,y)=H(x)H(y).\] By duality and the integration by parts formula,
\begin{align}\label{delta}
-\sum_{x\neq y}& H(x,y)\Cov(\eta_t(x),\eta_t(y)) = \sum_{x\neq y} H(x,y)[U_2(t)-V_2(t)]\eta(x,y)\notag\\
=&\int_0^t\sum_{x\neq y}H(x,y) V_2(t-s)[U_2-V_2]U_2(s)\eta(x,y)ds.
\end{align}

Now from (\ref{U}) and (\ref{V}) we have that
\begin{align*}
 [U_2-V_2]U_2(s)\eta(x,y) &= p(x,y)\Big\{U_2(s)\eta(x,x)+U_2(s)\eta(y,y)-2U_2(s)\eta(x,y)\Big\}\\
&=p(x,y)[\E^y\eta(X_s)-\E^x\eta(X_s)]^2,
\end{align*}
which we substitute into (\ref{delta}), also using the fact that $V_2(t-s)$ is a symmetric linear operator on the space of functions on $\{(x,y)\in S^2;x\neq y\}$:
\begin{align*}
	-\sum_{x\neq y} &H(x)H(y)\Cov (\eta_t(x),\eta_t(y))\notag\\
	 &= \int_0^t \sum_{x\neq y} p(x,y)[\E^y\eta(X_s)-\E^x\eta(X_s)]^2 V_2(t-s)H(x,y)ds\notag\\
	&\leq \int_0^t \sum_{x\neq y} p(x,y)[\E^y\eta(X_s)-\E^x\eta(X_s)]^2 U_2(t-s)H(x,y)ds,
\end{align*}
by a standard inequality comparing interacting and non-interacting particles \cite{liggettbook}*{VIII, Proposition 1.7}.

Combining Lemma \ref{varlemma} and the above estimate we obtain:
\begin{align}\label{together}
		4\Var (W^\eta_t)\geq \int_0^t\sum_{x\neq y} p(x,y)[\E^y\eta(X_s)-\E^x\eta(X_s)]^2q_{t-s}(x,y)ds,
\end{align}
where \[q_s(x,y) =1-U_2(s)H(x,y)= 1-[1-2P^x(X_s\in A)][1-2P^y(X_s\in A)].\] Notice that there is a constant $c'>0$, depending only on the $c$ in (\ref{balance}), such that $q_s(x,y)>c'$ for each $x,y\in S$ and then $s$ large enough. So for fixed $T>0$, applying Fatou's Lemma twice, then using Lemma \ref{varlemma} again,
\begin{align*}
4\liminf_{t\rightarrow\infty}\Var(W^\eta_t)\geq &\int_0^T\sum_{x\neq y} p(x,y)[\E^y\eta(X_s)-\E^x\eta(X_s)]^2\liminf_{t\rightarrow\infty}q_{t-s}(x,y)ds\\
\geq &\int_0^T  \sum_{x\neq y}p(x,y)[\E^y\eta(X_s)-\E^x\eta(X_s)]^2c' ds\\
=&c'\sum_{x\in S}\Var(\eta_T(x)).
\end{align*}

Now for any finite $S'\subset S$,
\begin{align*}
\sum_{x\in S'} \Var(\eta_T(x)) = \sum_{x\in S'}\sum_{y\in S}\eta(y)p_T(x,y)\bigg[1-\sum_{z\in S}\eta(z)p_T(x,z)\bigg]\\
=\sum_{\eta(y)=1}\sum_{\eta(z)=0}\sum_{x\in S'}p_T(y,x)p_T(x,z)\rightarrow\sum_{\eta(y)=1}\sum_{\eta(z)=0}p_{2T}(z,y),
\end{align*}
as $S'\nearrow S$, by monotone convergence. By duality again, this is precisely \[\sum_{\eta(y)=1}\E^\eta(1-\eta_{2T}(y)).\] Hence by (\ref{varsum}), \[\liminf_{t\rightarrow \infty} \Var(W^\eta_t)=\infty,\] as desired.
\end{proof}

\section{Sums of strongly Rayleigh random variables}
In this section we prove Proposition 1 for infinite-dimensional $\mu$. By a generalization of the Borel-Cantelli lemmas \cite{erdos}, applied to the negatively dependent events $\{\eta(x)=1\}$, \[\sum_{x\in T}\eta(x) = \infty, \ \mu \mbox{-a.s. if }\E^\mu\sum_{x\in T}\eta(x) = \infty.\] In this case the proposition is trivially true, hence we may assume that \[\E^\mu\sum_{x\in T}\eta(x)<\infty.\]
First take an increasing sequence of finite subsets $T_n\nearrow T$. Now define for $z\in \C$ the polynomials \[Q_n(z) = \E^\mu z^{\sum_{x\in T_n}\eta(x)}.\] The limit \[Q(z) = \E^\mu z^{\sum_{x\in T}\eta(x)}\] exists, and in fact $Q_n\rightarrow Q$ uniformly on compact sets. Indeed, 
\begin{align*}
|Q_n(z)-Q(z)|\leq \E^\mu\bigg|z^{\sum_{x\in T_n}\eta(x)}\Big[1-z^{\sum_{x\in T\setminus T_n}\eta(x)}\Big]\bigg|\\
\leq \E^\mu\bigg(\max\Big\{1,|z|^{\sum_{x\in T}\eta(x)}\Big\}1_{\{\eta\not\equiv 0 \mbox{ on }T\setminus T_n\}}\bigg).
\end{align*}
Now for $|z|=r>1$, $r^t$ is an increasing function of $t$, so the negative dependence property of $\mu$ (\ref{negcor}), implies that \begin{equation}\label{Q}
\E^\mu r^{\sum_{x\in T}\eta(x)}\leq\prod_{x\in T}\E^\mu r^{\eta(x)}=\prod_{x\in T}[1+(r-1)\E^\mu\eta(x)]\leq e^{r\E^\mu\sum_{x\in T}\eta(x)}<\infty.
\end{equation}
(The last inequality uses the estimate $1+x\leq e^x$.) Dominated convergence then gives the normal convergence $Q_n\rightarrow Q$. In particular, $Q$ is entire.

By stability (\ref{stability}), $Q_n(z)$ has only real zeros, furthermore, the zeros must all be negative because the coefficients of $Q_n$ are all non-negative. By classical theorems on entire functions \cite{levin}*{VIII, Theorem 1}, the limit $Q(z)$ has the form
\[Q(z)=Ce^{-\sigma z}\prod_{k=1}^\infty\bigg[1-\frac{z}{a_k}\bigg],\] for some $\sigma\leq 0$, and $a_k<0$ with $\sum|a_k|^{-1}<\infty$. It is enough to show that $\sigma=0$, because, as $Q(1)=1$ we can solve for $C$ to obtain \[Q(z) = \prod_{k=1}^\infty\frac{a_k-z}{a_k-1} = \prod_{k=1}^\infty[p_kz + (1-p_k)],\] where we set $p_k = 1/(1-a_k)$. But this last expression is just the generating function for the sum of independent Bernoulli r.v.'s having the parameters $p_k$. 

To obtain $\sigma=0$, we show that $|Q(z)|\leq e^{c|z|}$ for any $c>0$ and $|z|$ large enough. It is clear that $|Q(z)|\leq Q(|z|)$, hence we consider only $z=r>1$. Recall from (\ref{Q}) that \[Q(r)\leq\prod_{x\in T}[1+(r-1)\E^\mu\eta(x)].\] Let \[a_x=\E^\mu\eta(x).\] With $a = \sum_{x\in T}a_x<\infty$, note that $\#\{x:a_x>r^{-1/2}\}\leq ar^{1/2}$. Then by a trivial bound we have 
\begin{align*}
Q(r)\leq \bigg(\prod_{a_x>r^{-1/2}} r\bigg)\bigg(\prod_{a_x\leq r^{-1/2}} e^{(r-1)a_x}\bigg)\\
\leq r^{ar^{1/2}}\exp\bigg((r-1)\sum_{a_x\leq r^{-1/2}}a_x\bigg).
\end{align*}
As $r\rightarrow\infty$ the sum inside the exponential goes to zero, which concludes the proof.

\section{Examples}
1. Consider $S=\Z$, partitioned into $A=\{x\leq 0\}$ and $B=\{x>0\}$, with translation invariant rates $p(0,x)$ in the domain of a symmetric stable law with index $\alpha>1$. That is, \[\sum_{y\geq x}p(0,y)\sim L(x)x^{-\alpha},\ x>0,\]for a slowly varying function $L$. Consider the step initial condition $\eta$ with particles at all $x\leq 0$.
The balance condition holds by the central limit theorem for random variables in the domain of attraction of a stable law. By duality and translation invariance, \[\sum_{x\leq 0} P^\eta(\eta_t(x)=0) = \sum_{x\leq 0}\sum_{y>0}P^x(X_t=y) = \sum_{n>0}nP^0(X_t=n) = \E^0 X_t^+\rightarrow \infty.\] (In fact, it grows at rate $t^{1/\alpha}$). This shows (\ref{varsum}), and the above expression is the same as $\E^\eta W_t$, so all conditions of Theorem \ref{maintheorem} have been verified.

2. Now consider the one-dimensional exclusion process in an random environment, with the same partition as in the previous example. The random environment is described by $\{\omega_i\}$, an iid family of random variables with $\omega\in (0,1]$ almost surely and $\E\frac{1}{\omega_i}<\infty$. For each realization $(\dotsc, \omega_{-1},\omega_0,\omega_1,\dotsc)$, we consider the exclusion process with the rates $p(i,i+1) =p(i+1,i) = \omega_i$. By the result of Kawazu and Kesten \cite{kesten}, we know that the process $\{X_{n^2t}/n\}$ converges weakly to a scaled Brownian motion, from which follows the balance condition. By the remarks after (\ref{current}) we know that the current has finite expectation for any initial placement of particles. Let us consider the case where we pick a realization $\eta$ of the homogeneous product measure $\nu_\rho$. Then almost surely-$\nu_\rho$ there are infinitely many $x\in \Z$ such that $\eta(x)=1$ and $\eta(x+1)=0$, so from the ergodicity of the environment, \[\infty = \sum_{\substack{\eta(x)=1\\ \eta(x+1)=0}}P^x(X_t=x+1)\leq \sum_{\eta(x)=1}P^x(\eta(X_t)=0),\] which shows (\ref{varsum}) by duality.

\subsection*{Acknowledgments.} This article is part of the authors' thesis under T. M. Liggett, whom the author would like to thank for his advice and encouragement.

\begin{bibdiv}
\begin{biblist}

\bib{andjel}{article}{
  author =	 	{Andjel, E. D.},
  title = 	 	{A correlation inequality for the symmetric exclusion process},
  year = 			{1988},
  journal = 	{Ann. Probab.},
  volume = {16},
  pages = {717-721}}

\bib{arratia}{article}{
  author =	 	{Arratia, R.},
  title = 	 	{The motion of a tagged particle in the simple symmetric exclusion system on $\mathbb{Z}^1$},
  year = 			{1983},
  journal = 	{Ann. Probab.},
  volume = {11},
  pages = {362-373}}

\bib{sepp}{article}{
  author =	 	{Bal\'azs, M.},
  author =    {Seppalainen, T.},
  title = 	 	{Order of current variance and diffusivity in the asymmetric simple exclusion process},
  year = 			{2008},
  journal = 	{http://arxiv.org/abs/math.PR/0608400, to appear in Ann. of Math},
  volume = {},
  pages = {}}

\bib{branden}{article}{
  author =	 	{Borcea, J.},
  author =	 	{Br\"and\'en, P.},
  author =	 	{Liggett, T.M.},
  title = 	 	{Negative dependence and the geometry of polynomials},
  year = 			{2009},
  journal = 	{J. Amer. Math. Soc.},
  volume = {22},
  pages = {521-567}}

\bib{demasiferrari}{article}{
  author =	 	{De Masi, A.},
  author =	 	{Ferrari, P.A.},
  title = 	 	{Flux fluctuations in the one dimensional nearest neighbors symmetric simple exclusion process},
  year = 			{2002},
  journal = 	{J. Statist. Phys.},
  volume = {107},
  pages = {677-683}}
  
\bib{derrida}{article}{
  author =	 	{Derrida, B.},
  author = 	  {Gerschenfeld, A.},
  title = 	 	{Current fluctuations of the one dimensional symmetric simple exclusion process with a step initial condition},
  year = 			{2009},
  journal = 	{J. Stat. Phy.},
  volume = {136},
  pages = {1-15}}
  
\bib{erdos}{article}{
  author =	 	{Erd\H{o}s, P.},
  author =    {R\'enyi, A.},
  title = 	 	{On Cantor's series with convergent $\sum 1/q_{n}$},
  year = 			{1959},
  journal = 	{Ann. Univ. Sci. Budapest. E\"otv\"os. Sect. Math},
  volume = {2},
  pages = {93-109}}
 
\bib{spohn}{article}{
  author =	 	{Ferrari, P. L.},
  author =    {Spohn, H.},
  title = 	 	{Scaling limit for the space-time covariance of the stationary totally
asymmetric simple exclusion process},
  year = 			{2006},
  journal = 	{Comm. Math. Phys.},
  volume = {265},
  pages = {1-44}}
 
\bib{peres}{article}{
    author = {Hough, J. B.},
    author = {Krishnapur, M.},
    author = {Peres, Y.},
    author = {Vir{\'a}g, B.},
     title = {Determinantal processes and independence},
   journal = {Probab. Surv.},
    volume = {3},
      year = {2006},
     pages = {206--229}}
      
\bib{kesten}{article}{
  author =	 	{Kawazu, K.},
  author =    {Kesten, H.},
  title = 	 	{On birth and death processes in symmetric random environment},
  year = 			{1984},
  journal = 	{J. Stat. Phys.},
  volume = {37},
  pages = {561-576}}
       
\bib{JaraLandim}{article}{
  author =	 	{Jara, M.},
  author =	 	{Landim, C.},
  title = 	 	{Nonequilibrium central limit theorem for a tagged particle in symmetric simple exclusion},
  year = 			{2006},
  journal = 	{Ann. Inst. H. Poincar\'e Prob. Stat.},
  volume = {42},
  pages = {567-577}}
  
\bib{lararandom}{article}{
  author =	 	{Jara, M.},
  author =    {Landim, C.},
  title = 	 	{Quenched non-equilibrium central limit theorem for a tagged particle in the exclusion process with bond disorder},
  year = 			{2008},
  journal = 	{Ann. Inst. H. Poincar\'e Prob. Stat.},
  volume = {44},
  pages = {341-361}}
  
\bib{levin}{book}{
  author =	 	{Levin, B. Ja.},
  title = 	 	{Distribution of zeros of entire functions},
  year = 			{1980},
 	publisher = {American Mathematical Society}}
  
\bib{liggettbook}{book}{
  author =	 	{Liggett, T.M.},
  title = 	 	{Interacting particle systems},
  year = 			{1985},
 	publisher = {Springer-Verlag}}
  
\bib{liggett2}{article}{
  author =	 	{Liggett, T.M.},
  title = 	 	{Negative correlations and particle systems},
  year = 			{2002},
  journal = 	{Markov Proc. Rel. Fields},
  volume = {8},
  pages = {547-564}}

\bib{liggettarticle}{article}{
  author =	 	{Liggett, T.M.},
  title = 	 	{Distributional limits for the symmetric exclusion process},
  year = 			{2009},
  journal = 	{Stoch. Proc. App.},
  volume = {119},
  pages = {1-15}}
  
\bib{lyons}{article}{
    author = {Lyons, R.},
     title = {Determinantal probability measures},
   journal = {Publ. Math. Inst. Hautes \'Etudes Sci.},
    volume = {98},
      year = {2003},
     pages = {167--212}}
  
\bib{seth}{article}{
  author =	 	{Peligrad, M.},
  author =	 	{Sethuraman, S.},
  title = 	 	{On fractional Brownian motion limits in one-dimensional nearest neighbor symmetric simple exclusion},
  year = 			{2008},
  journal = 	{Alea},
  volume = {4},
  pages = {245-255}}
  
\bib{pemantle}{article}{
  author =	 	{Pemantle, R.},
  title = 	 	{Towards a theory of negative dependence},
  year = 			{2000},
  journal = 	{J. Math. Phys.},
  volume = {41},
  pages = {1371-1390}}  
     
\bib{petrov}{book}{
  author =	 	{Petrov, V. V.},
  title = 	 	{Sums of independent random variables},
  year = 			{1975},
 	publisher = {Springer-Verlag}}
 	
\bib{tracy}{article}{
  author =	 	{Tracy, C. H.},
  author =    {Widom, H.},
  title = 	 	{Asymptotics in asep with step initial condition},
  year = 			{2009},
  journal = 	{Comm. Math. Phys.},
  volume = {290},
  pages = {129-154}}
  
\end{biblist}
\end{bibdiv}

\end{document}